\definecolor{cobalt}{rgb}{0.0, 0.28, 0.67}
\numberwithin{equation}{section}
\newtheorem{Teo}{Theorem}[section]
\theoremstyle{plain}
\theoremstyle{plain}
\newtheorem{Pro}[Teo]{Proposition}
\theoremstyle{plain}
\newtheorem{Cor}[Teo]{Corollary}
\newtheorem{Def}[Teo]{Definition}
\theoremstyle{definition}
\newtheorem{Oss}[Teo]{Remark}
\newcommand{\N}{{\mathbb N}}
\newcommand{\Z}{{\mathbb Z}}
\newcommand{\R}{{\mathbb R}}
\newcommand{\eps}{\varepsilon}
\newenvironment{enumroman}{\begin{enumerate}

}{\end{enumerate}}
\title[Non-local monotonicity and u.c.p.]{Strict monotonicity and unique continuation for general non-local eigenvalue problems}
\author[S.\ Frassu, A.\ Iannizzotto]{Silvia Frassu, Antonio Iannizzotto}
\address[S.\ Frassu, A.\ Iannizzotto]{Department of Mathematics and Computer Science
\newline\indent
University of Cagliari
\newline\indent
Viale L.\ Merello 92, 09123 Cagliari, Italy}
\email{silvia.frassu@unica.it, antonio.iannizzotto@unica.it}
\subjclass[2010]{35R11, 35B60, 47A75}
\keywords{Non-local operators, Eigenvalue problems, Unique continuation}
\begin{document}

\begin{abstract}
We consider the weighted eigenvalue problem for a general non-local pseudo-differential operator, depending on a bounded weight function. For such problem, we prove that strict (decreasing) monotonicity of the eigenvalues with respect to the weight function is equivalent to the unique continuation property of eigenfunctions. In addition, we discuss some unique continuation results for the special case of the fractional Laplacian.
\end{abstract}

\maketitle

\begin{center}
Version of \today\
\end{center}

\section{Introduction}\label{sec1}

\noindent
Weighted eigenvalue problems can be studied for any type of linear elliptic (ordinary or partial) differential operator or even for integro-differential operator, exhibiting some kind of uniform ellipticity, and under various boundary conditions. In most cases, the resulting problem can be written as
\[\begin{cases}
Lu=\lambda\rho(x)u & \text{in $\Omega$} \\
u\in X(\Omega),
\end{cases}\]
where $L$ is the chosen operator, $\Omega$ is a bounded domain, $\rho\in L^\infty(\Omega)$ is the weight function, and $X(\Omega)$ is some function space defined on $\Omega$ (which includes the boundary conditions). The problem above admits a sequence of variational eigenvalues, generally unbounded both from above and below (the sequence is bounded from below if $\rho$ is non-negative, and from above if $\rho$ is non-positive), denoted by
\[\ldots \le\lambda_{-k}(\rho)\le\ldots\le\lambda_{-1}(\rho)<0<\lambda_1(\rho)\le\ldots\le\lambda_k(\rho)\le\ldots\]
(we refer to \cite{F}). Even non-linear operators, under some homogeneity and monotonicity properties, exhibit an analogous sequence of variational eigenvalues, though it is not known whether they cover the whole spectrum (see \cite{PAO}).
\vskip2pt
\noindent
Clearly, every eigenvalue depends on the weight function, and it is an easy consequence of the variational characterization of eigenvalues that the mapping $\rho\mapsto\lambda_k(\rho)$ is monotone non-increasing for all integer $k\neq 0$, with respect to the pointwise order in $L^\infty(\Omega)$. A more delicate question is whether such dependence is {\em strictly decreasing}. De Figueiredo and Gossez \cite{FG} have proved that, if $L$ is a second order elliptic operator with bounded coefficients and Dirichlet boundary conditions, strict monotonicity of the eigenvalues with respect to the weight is equivalent to the {\em unique continuation property} (for short, u.c.p.) of eigenfunctions, i.e., to the fact that eigenfunctions vanish at most in a negligible set. The result strongly relies on min-max characterizations of the eigenvalues of both signs.
\vskip2pt
\noindent
Such equivalence is extremely important in the study of {\em non-linear} boundary value problems of the type
\[\begin{cases}
Lu=f(x,u) & \text{in $\Omega$} \\
u\in X(\Omega),
\end{cases}\]
where $f:\Omega\times\R\to\R$ is a Carath\'eodory mapping, asymptotically linear in the second variable either at zero or at infinity. Many existence/multiplicity results for non-linear boundary value problems are obtained by locating the limits
\[\lim_{t\to 0,\infty}\frac{f(x,t)}{t}\]
in known spectral intervals of the type $[\lambda_k(\rho),\lambda_{k+1}(\rho)]$, possibly involving several weight functions, and then by using strict monotonicity to avoid resonance phenomena. Thus, it is possible to compute the critical groups of the corresponding energy functional at zero and at infinity, and so deduce the existence of non-trivial solutions (one typical application of this approach for the fractional Laplacian can be found in \cite{IP}).
\vskip2pt
\noindent
Motivated by the considerations above, we devote this note to proving an analog of the results of \cite{FG} for a very general family of linear non-local operators, introduced by Servadei and Valdinoci in \cite{SV}, which includes as a special case the fractional Laplacian (for a general discussion on fractional boundary value problems, we refer to \cite{MBRS}). We study the following eigenvalue problem:
\begin{equation}
\begin{cases}
L_K u = \lambda \rho(x) u  & \text{in $\Omega$ } \\
u = 0 & \text{in $\R^N \setminus \Omega$.}
\end{cases}
\label{P}
\end{equation}
Here $\Omega\subset \R^N$ is a bounded domain with a Lipschitz continuous boundary, the leading operator is defined by
$$L_K u(x)= P.V. \int_{\R^N} (u(x)-u(y))K(x-y)\,dy,$$  
namely a general non-local operator, whose kernel $K$ satisfies the following hypotheses:
\begin{itemize}[leftmargin=0.75cm]
	\item[${\bf H}_K$] $K:\R^N \setminus \{0\} \to (0, +\infty)$ s.t.\
	\vspace{0.1cm}
	\begin{enumroman}
		\item\label{h1} $m K \in L^1(\R^N)$, where $m(x)=\min\{|x|^2,1\}$;
		\vspace{0.1cm}
		\item\label{h2} $K(x)\geq \alpha |x|^{-(N+2s)}$ in $\R^N\setminus \{0\} \; (\alpha>0, s \in (0,1) \; \text{s.t.} \; N>2s)$;
		\vspace{0.1cm}
		\item\label{h3} $K(-x)=K(x)$ in $\R^N \setminus \{0\}$.
	\end{enumroman}
\end{itemize}
For $K(x)= |x|^{-N-2s}$ we have $L_K=(-\Delta)^s$ (the Dirichlet fractional Laplacian). The operator $L_K$ is the infinitesimal generator of a (possibly anisotropic) L\'evy process, and thus it arises often in modeling phenomena of anomalous diffusion with long distance interactions (see \cite{F1,RO} and the references therein). Problem \eqref{P} depends on a weight function $\rho \in L^{\infty}(\Omega)$, and it admits a sequence of eigenvalues $(\lambda_k(\rho))_{k \in \Z_0}$ ($k \in \pm \N_0$ if $\rho$ has constant sign). Here we prove equivalence between the strict monotonicity of the mapping $\rho\mapsto\lambda_k(\rho)$ ($k\in\Z_0$), and u.c.p.\ of eigenfunctions. We note that, in general, u.c.p.\ for solutions of non-local problems is a challenging open problem, though some partial results have been established, mostly regarding the case of the fractional Laplacian.
\vskip2pt
\noindent
In Section \ref{sec2} we give problem \eqref{P} an appropriate functional analytic setting and recall the general structure of the spectrum; in Section \ref{sec3} we prove our equivalence result; and in Section \ref{sec4} we survey some known results about u.c.p.\ for non-local operators.
\vskip4pt
\noindent
{\bf Notation.} For all $U\subset\R^N$ we denote by $|U|$ its Lebesgue measure. For any two measurable functions $f$, $g$ defined in $U$, we write $f\le g$ for '$f(x)\le g(x)$ for a.e.\ $x\in U$', and similarly $f\ge g$, $f<g$, $f>g$, and $f\equiv g$. We denote by $f^+$, $f^-$ the positive and negative parts of $f$, respectively. For all $q\in[1,\infty]$ we denote by $\|\cdot\|_q$ the norm of $L^q(\Omega)$.

\section{Functional analytic setting and general properties of the eigenvalues}\label{sec2}

\noindent
We introduce a functional analytic setting for problem \eqref{P}, following \cite{SV} (see also \cite{MBRS}). For all measurable $u:\R^N \rightarrow \mathbb{R}$ set 
$$[u]_{K}^2 := \int_{\R^{2N}} (u(x)-u(y))^2 K(x-y)\,dxdy,$$
then define
$$X_K(\Omega)=\big\{u \in L^2(\R^N): [u]_{K} < \infty,\,u=0 \text{ a.e.\ in } \R^N \setminus \Omega \big\},$$
endowed with the scalar product
$$\left\langle u,v \right\rangle = \int_{\R^{2N}} (u(x)-u(y))(v(x)-v(y))K(x-y)\,dxdy$$
and the corresponding norm $\|u\|=[u]_{K}$. Then, $(X_K(\Omega),\|\cdot\|)$ is a  Hilbert space, continuously embedded into the fractional Sobolev space $H^s(\Omega)$ and hence into $L^q(\Omega)$ for all $q\in [1,2_s^*]$ ($2_{s}^{*}:= 2N/(N-2s)$), with compact embedding iff $q<2_s^*$ \cite[Lemmas 5-8]{SV}.
\vskip2pt
\noindent
We say that $u \in X_K(\Omega)$ is a (weak) solution of \eqref{P}, if for all $v \in X_K(\Omega)$
$$\left\langle u,v \right\rangle = \lambda \int_{\Omega} \rho(x) uv\,dx.$$
If, for a given $\lambda\in\R$, problem \eqref{P} has a non-trivial solution $u\in X_K(\Omega)\setminus\{0\}$, then $\lambda$ is an {\em eigenvalue} with associated {\em eigenfunction} $u$. The {\em spectrum} of \eqref{P} is the set of all eigenvalues, denoted $\sigma(\rho)$.
\vskip2pt
\noindent
Following the general scheme of \cite{F}, we provide a characterization of $\sigma(\rho)$. In particular, we provide four min-max formulas for eigenvalues of both signs, that will be a precious tool in the proof of our main results:

\begin{Pro}\label{EV}
Let $\rho \in L^{\infty}(\Omega)$, $\rho \not\equiv 0$. Set for all integer $k>0$
$$\mathcal{F}_k=\big\{F \subset X_K(\Omega): F \text{ linear subspace,}\,\mathrm{dim}(F)=k\big\},$$
and
\begin{equation}
\lambda_k^{-1}(\rho)= \sup_{F \in \mathcal{F}_k} \inf_{u \in F,\,||u||=1}  \int_{\Omega} \rho(x) u^2\,dx
= \inf_{F \in \mathcal{F}_{k-1}} \sup_{u \in F^{\perp},\,||u||=1}  \int_{\Omega} \rho(x) u^2\,dx, 
\label{E+}
\end{equation}  
\begin{equation}
 \lambda_{-k}^{-1}(\rho)= \inf_{F \in \mathcal{F}_k} \sup_{u \in F,\,||u||=1}  \int_{\Omega} \rho(x) u^2\,dx
 = \sup_{F \in \mathcal{F}_{k-1}} \inf_{u \in F^{\perp},\,||u||=1}  \int_{\Omega} \rho(x) u^2\,dx, 
\label{E-}
\end{equation}  
Then,
\begin{enumroman}
\item if $\rho^+\not\equiv 0$, then
\[0<\lambda_1(\rho)<\lambda_2(\rho)\le\ldots \lambda_k(\rho)\le\ldots\to \infty,\]
and for all $k\in\N_0$ $\lambda_k(\rho)$ is an eigenvalue of \eqref{P} with associated eigenfunction $e_{k,\rho}\in X_K(\Omega)$;
\item if $\rho^-\not\equiv 0$, then
\[0>\lambda_{-1}(\rho)>\lambda_{-2}(\rho)\ge\ldots\ge\lambda_{-k}(\rho)\ge\ldots\to -\infty,\]
and for all $k\in\N_0$ $\lambda_{-k}(\rho)$ is an eigenvalue of \eqref{P} with associated eigenfunction $e_{-k,\rho}\in X_K(\Omega)$.
\end{enumroman}
Moreover, all sup's and inf's in \eqref{E+}, \eqref{E-} are attained (at $\lambda_{\pm k}(\rho)$-eigenfunctions). If $\rho\ge 0$ (resp.\ $\rho\le 0$), then \eqref{P} admits only positive (resp.\ negative) eigenvalues. Finally, for all $k,h\in\Z_0$
$$\left\langle e_{k,\rho},e_{h,\rho} \right\rangle=\delta_{kh}.$$
\end{Pro}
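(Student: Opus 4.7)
The plan is to recast \eqref{P} as a spectral problem for a compact self-adjoint operator on $X_K(\Omega)$ and then invoke the classical Courant--Fischer min-max theory. Via Riesz representation, I define $T_\rho:X_K(\Omega)\to X_K(\Omega)$ as the unique bounded linear operator satisfying
$$\langle T_\rho u, v\rangle = \int_\Omega \rho(x)u(x)v(x)\,dx \qquad \text{for all } u,v\in X_K(\Omega).$$
The right-hand side is a bounded symmetric bilinear form, since $\rho\in L^\infty(\Omega)$ and $X_K(\Omega)\hookrightarrow L^2(\Omega)$ continuously, so $T_\rho$ is self-adjoint. Its compactness follows from the compact embedding $X_K(\Omega)\hookrightarrow L^2(\Omega)$: a weakly convergent sequence $u_n\rightharpoonup u$ in $X_K(\Omega)$ converges strongly in $L^2(\Omega)$, and this forces $T_\rho u_n\to T_\rho u$ in $X_K(\Omega)$ by definition of $T_\rho$. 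A direct comparison of weak formulations shows that $u\in X_K(\Omega)\setminus\{0\}$ solves \eqref{P} with $\lambda\neq 0$ iff $T_\rho u=\lambda^{-1}u$, so the non-zero eigenvalues of \eqref{P} are precisely the reciprocals of the non-zero eigenvalues of $T_\rho$.

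Next I apply the spectral theorem for compact self-adjoint operators to $T_\rho$: this yields a (possibly two-sided) sequence of real eigenvalues $(\mu_k)$ accumulating only at $0$, with finite-dimensional mutually orthogonal eigenspaces, from which I extract an orthonormal system of eigenfunctions $e_{k,\rho}$ (normalized in $X_K(\Omega)$), giving the orthonormality relation $\langle e_{k,\rho},e_{h,\rho}\rangle=\delta_{kh}$. Positive eigenvalues $\mu_k$ exist iff the quadratic form $Q(u):=\int_\Omega \rho u^2\,dx$ attains positive values on $X_K(\Omega)$, which is equivalent to $\rho^+\not\equiv 0$ (take a smooth bump supported on $\{\rho>\varepsilon\}$ for some $\varepsilon>0$); symmetrically for negative eigenvalues. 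If $\rho\ge 0$, then $Q\ge 0$ forces all $\mu_k$'s to be positive, hence every $\lambda_k(\rho)$ is positive; similarly for $\rho\le 0$. Formulas \eqref{E+} and \eqref{E-} then follow verbatim from the classical Courant--Fischer characterizations of $\mu_{\pm k}$, because $\langle T_\rho u,u\rangle = Q(u)$ and $\lambda_{\pm k}(\rho)=\mu_{\pm k}^{-1}$; attainment of the sup's and inf's is built into the standard statement, the optimal $k$-dimensional subspace being spanned by the top $k$ eigenfunctions of $T_\rho$.

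The most delicate point is the \emph{strict} inequality $\lambda_1(\rho)<\lambda_2(\rho)$ when $\rho^+\not\equiv 0$, that is, the simplicity of $\mu_1=\max_{\|u\|=1}Q(u)$. I would handle this by a Perron-type argument: from $(|a|-|b|)^2\le(a-b)^2$ one gets $|u|\in X_K(\Omega)$ with $\||u|\|\le\|u\|$, so if $u$ is a first eigenfunction then so is $|u|$; the strong maximum principle for $L_K$ then forces $|u|>0$ in $\Omega$, and two strictly positive eigenfunctions cannot be orthogonal in $L^2_\rho$, yielding one-dimensionality of the first eigenspace. The symmetric statement for $\lambda_{-1}(\rho)$ follows by switching the sign of $\rho$. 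Everything else (the monotone divergence $\lambda_k(\rho)\to\infty$ and $\lambda_{-k}(\rho)\to -\infty$) is a direct translation of the analogous facts for $(\mu_k)$.
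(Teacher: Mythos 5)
Your overall strategy (Riesz representation of the weighted form, compactness of $T_\rho$ from the compact embedding $X_K(\Omega)\hookrightarrow L^2(\Omega)$, Courant--Fischer for the compact self-adjoint operator, $\lambda_{\pm k}(\rho)=\mu_{\pm k}^{-1}$) is exactly the paper's route, which carries out the same construction citing de Figueiredo's abstract results for the min-max formulas. The genuine gap is in the step you yourself flag as the delicate one, namely the strict inequality $\lambda_1(\rho)<\lambda_2(\rho)$. Your concluding argument -- ``two strictly positive eigenfunctions cannot be orthogonal in $L^2_\rho$'' -- is not valid here, because $\rho$ is an arbitrary sign-changing $L^\infty$ weight: $X_K$-orthogonality of two $\mu_1$-eigenfunctions is equivalent to $\int_\Omega\rho\,u_1u_2\,dx=0$, and an integral against an indefinite weight can perfectly well vanish even when $u_1,u_2>0$ a.e. So positivity of the first eigenfunctions yields no contradiction, and simplicity is not proved for general $\rho$ (your argument does work when $\rho\ge 0$). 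A correct self-contained route is: from equality in $\|\,|u|\,\|\le\|u\|$ and the strict positivity of $K$ deduce that every $\mu_1$-eigenfunction has a.e.\ constant sign; then, if $u_1,u_2$ were linearly independent $\mu_1$-eigenfunctions (both, say, $\ge 0$ and $\not\equiv 0$), every combination $u_1-tu_2$ is again a $\mu_1$-eigenfunction and hence of constant sign, and a connectedness argument in $t$ forces $u_1-t^*u_2\equiv 0$ for some $t^*$, a contradiction. This is essentially the argument the paper delegates to \cite[Proposition 2.8]{IP}.

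Two secondary points. First, you invoke ``the strong maximum principle for $L_K$'' to get $|u|>0$ in $\Omega$: under ${\bf H}_K$ the kernel has no upper bound, so there is no interior regularity theory for eigenfunctions, and the standard SMP statements (continuous supersolutions, kernels with two-sided bounds) do not apply off the shelf; this input would need justification, whereas the constant-sign property above, which suffices, follows directly from $K>0$. Second, the set $\{\rho>\eps\}$ is only measurable, so you cannot literally place a smooth bump inside it; one should approximate $\chi_E$, $E\subset\{\rho>\eps\}$ of positive measure, by $C^\infty_c(\Omega)$ functions in $L^2$. Relatedly, to have $\lambda_k(\rho)$ defined for \emph{every} $k$ with $\lambda_k(\rho)\to\infty$ you need, for each $k$, a $k$-dimensional subspace on which $Q(u)=\int_\Omega\rho u^2\,dx$ is positive definite (e.g.\ split $\{\rho>\eps\}$ into $k$ disjoint pieces of positive measure and approximate their characteristic functions); producing a single function with $Q(u)>0$ only gives $\mu_1>0$.
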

\begin{proof}
Given $u \in X_K(\Omega)$, the linear functional 
$$v \mapsto  \int_{\Omega} \rho(x) uv\,dx$$
is bounded in $X_K(\Omega)$. By Riesz' representation theorem there exists a unique $T(u) \in X_K(\Omega)$ s.t.\
$$\left\langle T(u),v \right\rangle= \int_{\Omega} \rho(x) uv\,dx.$$
So we define a bounded linear operator $T\in\mathcal{L}(X_K(\Omega))$, indeed for all $u \in X_K(\Omega)$
$$||T(u)||=\sup_{v \in X_K(\Omega),\,||v||=1} \Big| \int_{\Omega} \rho(x) uv\,dx\Big| \leq ||\rho||_{\infty} ||u||_2 \sup_{v \in X_K(\Omega),\,||v||=1}	||v||_2 \leq C ||u||.$$
Clearly $T$ is symmetric. Moreover, $T$ is compact. Indeed, let $(u_n)$ be a bounded sequence in $X_K(\Omega)$, then (passing to a subsequence) $u_n \rightharpoonup u$ in $X_K(\Omega)$, 
$u_n \rightarrow u$ in $L^2(\Omega)$. So  we have for all $v \in X_K(\Omega)$, $||v||\leq 1$	
$$\big|\left\langle T(u_n)-T(u),v\right\rangle\big|\leq  \int_{\Omega} |\rho(x) (u_n-u)v|\,dx\leq ||\rho||_{\infty} ||u_n-u||_2 ||v||_2 \leq C ||u_n-u||_2,$$
and the latter tends to $0$ as $n\to\infty$. So $T(u_n)\rightarrow T(u)$ in $X_K(\Omega)$. First assume $\rho^+\not\equiv 0$, then
$$\mu_1=\sup_{u \in X_K(\Omega), ||u||=1} \left\langle T(u),u\right\rangle >0.$$
By \cite[Lemma 1.1]{F}, there exists $e_{1,\rho} \in X_K(\Omega)$ s.t.\ $T(e_{1,\rho})=\mu_1 e_{1,\rho}$, $||e_{1,\rho}||=1$. Further, set for all $k>0$
$$\mu_k=\sup_{F \in \mathcal{F}_k} \inf_{u \in F, ||u||=1}  \int_{\Omega} \rho(x) u^2\,dx>0.$$
Then, by \cite[Propositions 1.3, 1.8]{F}, there exists $e_{k,\rho} \in X_K(\Omega)$ s.t.\ $T(e_{k,\rho})=\mu_k e_{k,\rho}$. Applying \cite[Lemma 1.4]{F}, we see that $(\mu_k)$ is a sequence of eigenvalues of $T$, s.t.\ $\mu_k \ge \mu_{k+1}$ and $\mu_k \rightarrow 0^+$. Besides, for all $k>0$, the eigenspace associated to $\mu_k$ has finite dimension (hence it admits an orthonormal basis). So, by relabeling $(e_{k,\rho})$ if necessary, we have for all $k,h\in\Z_0$
$$\left\langle e_{k,\rho},e_{h,\rho} \right\rangle=\delta_{kh},$$
which in turn implies for all $k\neq h$
$$ \int_{\Omega} \rho(x) e_{k,\rho},e_{h,\rho}\,dx = 0.$$
Now set $\lambda_k(\rho)=\mu_k^{-1}$. Then, \eqref{E+} follows from the definition of $\mu_k$ and \cite[Proposition 1.7]{F}. Besides, we have for all $v\in X_K(\Omega)$
$$\left\langle e_{k,\rho},v\right\rangle  = \lambda_k(\rho) \int_{\Omega} \rho(x) e_{k,\rho} v\,dx,$$
so $\lambda_k(\rho) \in \sigma(\rho)$ with associated eigenfunction $e_{k,\rho}$. Moreover, $\lambda_k(\rho)\to \infty$ as $k\to\infty$, all eigenspaces are finite-dimensional, and eigenfunctions associated to different eigenvalues are orthogonal. Also, all sup's and inf's in \eqref{E+} are attained at (subspaces generated by) eigenfunctions. Finally, reasoning as in \cite[Proposition 2.8]{IP} it is easily seen that $\lambda_1(\rho)<\lambda_2(\rho)$ and that there are no positive eigenvalues other than $\lambda_k(\rho)$, $k>0$.
\vskip2pt
\noindent
Similarly, if $\rho^- \not\equiv 0$, then \eqref{E-} defines a sequence $(\lambda_{-k}(\rho))$ of negative eigenvalues of \eqref{P} s.t.\ $\lambda_{-k} (\rho) \rightarrow -\infty$, with an orthonormal sequence $(e_{-k,\rho})$ of associated eigenfunctions.
\vskip2pt
\noindent
By \cite[Proposition 1.11]{F}, if $\rho \geq 0$ there are no negative eigenvalues, similarly if $\rho \leq 0$ there are no positive eigenvalues.
\end{proof}

\noindent Now we prove continuous dependence of the eigenvalues on $\rho$, with respect to the norm topology of $L^{\infty}(\Omega)$ (in the forthcoming results, we say that $k\in\Z_0$ is {\em admissible} if the corresponding eigenvalue does exist):

\begin{Pro} \label{CD}
Let $(\rho_n)$ be a sequence in $L^{\infty}(\Omega)$ s.t.\ $\rho_n \rightarrow \rho$ in $L^{\infty}(\Omega)$. Then, for all admissible $k\in \Z_0$ we have $\lambda_k(\rho_n) \rightarrow \lambda_k(\rho)$.	
\end{Pro}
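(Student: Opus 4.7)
The plan is to deduce the continuous dependence directly from the min-max characterizations in Proposition \ref{EV}, exploiting the fact that the perturbation of the quadratic forms $u \mapsto \int_\Omega \rho_n u^2 \, dx$ is \emph{uniform} on the unit sphere of $X_K(\Omega)$. First I would fix an admissible $k \in \Z_0$; up to symmetry we may assume $k > 0$, so that $\rho^+ \not\equiv 0$. I would then check that $\lambda_k(\rho_n)$ is actually defined for all sufficiently large $n$: if $\rho \ge \delta > 0$ on some measurable set $A \subset \Omega$ of positive measure, then $\rho_n \ge \delta/2$ on $A$ as soon as $\|\rho_n-\rho\|_\infty < \delta/2$, so $\rho_n^+ \not\equiv 0$ eventually and Proposition \ref{EV} produces $\lambda_k(\rho_n)$.

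Next, using the continuous embedding $X_K(\Omega) \hookrightarrow L^2(\Omega)$, I pick $C > 0$ with $\|u\|_2^2 \leq C \|u\|^2$ for all $u \in X_K(\Omega)$. Then, for every $u$ with $\|u\|=1$,
$$\Big| \int_\Omega (\rho_n - \rho) u^2 \, dx \Big| \leq \|\rho_n-\rho\|_\infty\,\|u\|_2^2 \leq C\|\rho_n - \rho\|_\infty =: \eps_n,$$
with $\eps_n \to 0$ \emph{independently of} $u$. Plugging this uniform estimate into either of the two expressions for $\lambda_k^{-1}(\rho)$ in \eqref{E+} (the $\sup$-$\inf$ over $F \in \mathcal{F}_k$, or the $\inf$-$\sup$ over $F \in \mathcal{F}_{k-1}$), the constant $\eps_n$ passes through both the inner and outer optimisations, yielding
$$|\lambda_k^{-1}(\rho_n) - \lambda_k^{-1}(\rho)| \leq \eps_n.$$
Since $\lambda_k^{-1}(\rho) > 0$, inverting gives $\lambda_k(\rho_n) \to \lambda_k(\rho)$. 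The case $k<0$ is entirely symmetric, invoking \eqref{E-} instead of \eqref{E+}.

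No real obstacle is anticipated: the whole argument rests on the fact that in the $L^\infty$ topology a perturbation of the weight produces a \emph{uniform} perturbation of the quadratic form on the unit sphere of $X_K(\Omega)$, which therefore commutes with the min-max operations without any need for weak-convergence or compactness arguments. The only delicate point is the admissibility check above, which is a direct consequence of $L^\infty$-convergence of $\rho_n$ to $\rho$.
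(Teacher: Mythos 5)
Your proof is correct and follows essentially the same route as the paper: your uniform bound $\sup_{\|u\|=1}\big|\int_\Omega(\rho_n-\rho)u^2\,dx\big|\le C\|\rho_n-\rho\|_\infty$ is exactly what the paper encodes as operator-norm convergence $T_n\to T$ of the associated Riesz operators, and in both arguments this uniform perturbation is pushed through the min-max formulas \eqref{E+}, \eqref{E-} to yield $|\lambda_k^{-1}(\rho_n)-\lambda_k^{-1}(\rho)|\le C\|\rho_n-\rho\|_\infty$, from which the claim follows by inverting. Your direct sup/inf estimate even avoids the attainment-by-compactness step the paper invokes, and your eventual-admissibility check is a detail the paper simply assumes for simplicity.
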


\begin{proof}
For simplicity, assume $\rho_n^+ \not\equiv 0$ for all $n \in \N$, $\rho^+ \not\equiv 0$, and $k>0$ (other cases are studied similarly). Set for all $u,v \in X_K(\Omega)$
$$\left\langle T_n(u),v\right\rangle  = \int_{\Omega}  \rho_n(x) uv\,dx,$$
then $T_n \in \mathcal{L}(X_K(\Omega))$ is a bounded, symmetric, compact operator. Similarly we define $T \in \mathcal{L}(X_K(\Omega))$ using $\rho$. We claim that
\begin{equation}
T_n \rightarrow T \ \text{ in } \mathcal{L}(X_K(\Omega)).
\label{T}
\end{equation}
Indeed, for any $n \in \N$ and $u \in X_K(\Omega)$, $||u||=1$, we have by the Cauchy-Schwarz inequality
\begin{align*}
||T_n(u)-T(u)|| &= \sup_{v \in X_K(\Omega),\,||v||=1} \Big| \int_{\Omega} \rho_n(x) uv\,dx - 
\int_{\Omega} \rho(x) uv\,dx \Big| \\
&\leq \sup_{v \in X_K(\Omega),\,||v||=1} ||\rho_n - \rho||_{\infty} ||u||_2 ||v||_2 \leq C ||\rho_n - \rho||_{\infty},\end{align*}
and the latter tends to $0$ as $n\to\infty$. Now fix $k>0$: reasoning as in \cite[Theorem 2.3.1]{H}, we have for all $n\in\N$
\begin{equation}
|\lambda_{k}^{-1}(\rho_n) - \lambda_{k}^{-1}(\rho)| \leq ||T_n-T||_{\mathcal{L}(X_K(\Omega))}.
\label{T-}
\end{equation}
Indeed, recalling \eqref{E+}, there exists $F \in \mathcal{F}_k$ s.t.\
$$\lambda_{k}^{-1}(\rho)= \inf_{u \in F,\,||u||=1} \int_{\Omega} \rho(x) u^2\,dx.$$
By compactness, there exists $\hat{u} \in F, \; ||\hat{u}||=1$ s.t.\
$$\int_{\Omega} \rho_n(x) \hat{u}^2\,dx= \inf_{u \in F,\,||u||=1} \int_{\Omega} \rho_n(x) u^2\,dx.$$
So we have for all $n\in\N$
\begin{align*}
\lambda_{k}^{-1}(\rho) - \lambda_{k}^{-1}(\rho_n) &\leq \inf_{u \in F,\,||u||=1} \int_{\Omega} \rho(x) u^2\,dx - \inf_{u \in F,\,||u||=1} \int_{\Omega} \rho_n(x) u^2\,dx \\
&\leq \int_{\Omega} \rho(x) \hat{u}^2\,dx - \int_{\Omega} \rho_n(x) \hat{u}^2\,dx
=\left\langle T(\hat{u})-T_n(\hat{u}), \hat{u}\right\rangle \leq ||T-T_n||_{\mathcal{L}(X_K(\Omega))}.
\end{align*}
An analogous argument leads to
$$\lambda_{k}^{-1}(\rho) - \lambda_{k}^{-1}(\rho_n) \geq - ||T-T_n||_{\mathcal{L}(X_K(\Omega))},$$
proving \eqref{T-}. Now \eqref{T}, \eqref{T-} imply $\lambda_k(\rho_n) \rightarrow \lambda_k(\rho)$ as $n\to\infty$.
\end{proof}

\begin{Oss}
In fact, continuous dependence can be proved even with respect to weaker types of convergence, such as weak* convergence of the weights (see \cite[Theorem 3.1]{ACF}). Anyway, continuity in the norm topology is enough for our purposes.
\end{Oss}

\section{Strict monotonicity and u.c.p.}\label{sec3}

\noindent
This section is devoted to proving our main result, i.e., the equivalence between strict monotonicity of the map $\rho\mapsto\lambda_k(\rho)$ ($k\in\Z_0$) and u.c.p.\ of the eigenfunctions. Our definition of u.c.p.\ is the following:

\begin{Def} \label{ucp}
We say that $\rho \in L^{\infty}(\Omega) \setminus \{0\}$ satisfies u.c.p., if for any eigenfunction $u \in X_K(\Omega)$ of \eqref{P} (with any $\lambda \in \sigma(\rho)$)
$$\big|\big\{u=0\big\}\big|=0.$$	
\end{Def}

\noindent
We follow the approach of \cite{FG}. First we note that, by \eqref{E+} and \eqref{E-}, given $\rho, \tilde{\rho} \in L^{\infty}(\Omega) \setminus \{0\}$,
\begin{equation}
\rho \leq \tilde{\rho} \ \Rightarrow \ \lambda_k(\rho) \geq \lambda_k(\tilde{\rho}) \ \text{for all admissible $k \in \Z_0$.}
\label{M}
\end{equation}
First we prove that u.c.p.\ implies strict monotonicity:

\begin{Teo}\label{DIR}
Let $\rho, \tilde{\rho} \in L^{\infty}(\Omega) \setminus \{0\}$ be s.t.\ $\rho \leq \tilde{\rho}, \; \rho \not\equiv \tilde{\rho}$, and either $\rho$ or $\tilde{\rho}$ satisfies u.c.p. Then, $\lambda_k(\rho) > \lambda_k(\tilde{\rho})$ for all admissible $k \in \Z_0$.	
\end{Teo}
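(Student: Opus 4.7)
I would argue by contradiction: assume $\lambda_k(\rho) = \lambda_k(\tilde\rho) =: \lambda$ for some admissible $k \in \Z_0$, and construct a unit vector $\hat u$ that is simultaneously a $\lambda_k(\rho)$- and a $\lambda_k(\tilde\rho)$-eigenfunction while vanishing on $\{\tilde\rho > \rho\}$, a set of positive measure since $\rho \not\equiv \tilde\rho$. Whichever of $\rho$, $\tilde\rho$ enjoys u.c.p., Definition \ref{ucp} is then contradicted by $\hat u$. The construction, following \cite{FG}, consists in feeding the extremal subspace for $\rho$ into the appropriate min-max characterization of $\lambda_k(\tilde\rho)$ provided by Proposition \ref{EV} and chaining inequalities.

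Treat the case $k > 0$ first, using the max-min form in \eqref{E+}. Let $F_0 = \mathrm{span}\{e_{1,\rho},\ldots,e_{k,\rho}\}$; by extremality for $\rho$ together with the orthogonality relations, $\int_\Omega \rho u^2\,dx \geq 1/\lambda$ for every $u \in F_0$ with $\|u\|=1$. Since $\tilde\rho \geq \rho$, the same lower bound holds for $\int_\Omega \tilde\rho u^2\,dx$, while the max-min form applied to $\tilde\rho$ yields the reverse inequality $\inf_{u \in F_0,\,\|u\|=1} \int_\Omega \tilde\rho u^2\,dx \leq \lambda_k^{-1}(\tilde\rho) = 1/\lambda$. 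Hence this infimum equals $1/\lambda$ and is attained (by a compactness argument identical to the one in the proof of Proposition \ref{CD}) at some $\hat u \in F_0$ with $\|\hat u\|=1$. Chaining
$$\tfrac{1}{\lambda} = \int_\Omega \tilde\rho \hat u^2\,dx \geq \int_\Omega \rho \hat u^2\,dx \geq \tfrac{1}{\lambda},$$
equality everywhere forces $\int_\Omega (\tilde\rho - \rho) \hat u^2\,dx = 0$, hence $\hat u \equiv 0$ on $\{\tilde\rho > \rho\}$; moreover $\hat u$ attains both inner infima, so by the final clause of Proposition \ref{EV} it is an eigenfunction of both weighted problems at $\lambda_k$, giving the required contradiction.

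For $k = -m < 0$ the first form of \eqref{E-} is min-max and therefore has the wrong monotonicity for $\tilde\rho \geq \rho$, so I switch to the \emph{second} (max-min) form. Take $F_0 = \mathrm{span}\{e_{-1,\rho},\ldots,e_{-(m-1),\rho}\}$ (with $F_0 = \{0\}$ when $m=1$) as the extremal $(m-1)$-dimensional subspace for $\rho$; the identical chain then runs on $F_0^\perp$ in place of $F_0$. Both sides of the chain are negative this time, but the pointwise inequality $\int \tilde\rho u^2 \geq \int \rho u^2$ is sign-insensitive, so the argument closes verbatim. The one place I anticipate care is precisely this sign-matching step: one must actively pick the form of \eqref{E+}/\eqref{E-} for which $\tilde\rho \geq \rho$ transports the extremum in the direction compatible with $\lambda_k(\rho) \geq \lambda_k(\tilde\rho)$. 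Beyond that, the non-local nature of $L_K$ plays no role — the proof depends only on the abstract Hilbert-space spectral structure supplied by Proposition \ref{EV}.
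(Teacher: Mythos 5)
Your contradiction scheme can be made to work, but its pivotal step is not justified as written. The final clause of Proposition \ref{EV} states that the extrema in \eqref{E+}--\eqref{E-} are attained \emph{at some} eigenfunctions; it does not say that \emph{every} vector attaining them is an eigenfunction, and the latter is what you invoke. For the weight $\rho$ the conclusion is true, but only because your $F_0$ is spanned by $\rho$-eigenfunctions: writing $\hat u=\sum_{i\le k}c_ie_{i,\rho}$ and using $\int_\Omega\rho\,e_{i,\rho}e_{j,\rho}\,dx=\lambda_i^{-1}(\rho)\delta_{ij}$, the equality $\int_\Omega\rho\,\hat u^2\,dx=\lambda_k^{-1}(\rho)$ forces $c_i=0$ whenever $\lambda_i(\rho)<\lambda_k(\rho)$, so $\hat u$ lies in the $\lambda_k(\rho)$-eigenspace; this needs to be spelled out. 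For the weight $\tilde\rho$ the inference ``$F_0$ is extremal for $\tilde\rho$ and $\hat u$ minimizes there, hence $\hat u$ is a $\lambda_k(\tilde\rho)$-eigenfunction'' is false at the level of abstract spectral structure you appeal to: an extremal $k$-dimensional subspace need not consist of eigenvectors, and on it the form can even be constant on the unit sphere (already for a symmetric operator on $\R^3$ with eigenvalues $2,1,\tfrac12$ there is a plane on which the quadratic form is identically $1$), so minimizers need not be eigenvectors. This matters precisely in the case your scheme is meant to cover and the paper does not treat explicitly, namely when only $\tilde\rho$ satisfies u.c.p. The step is repairable by a different mechanism: once $\hat u$ is known to be a $\lambda$-eigenfunction for $\rho$ with $\hat u=0$ a.e.\ on $\{\tilde\rho>\rho\}$, you get $(\tilde\rho-\rho)\hat u=0$ a.e., hence $\int_\Omega\rho\,\hat uv\,dx=\int_\Omega\tilde\rho\,\hat uv\,dx$ for all $v\in X_K(\Omega)$, so $\hat u$ is also an eigenfunction of the $\tilde\rho$-problem with the same eigenvalue $\lambda$; with this insertion your argument closes and, unlike the paper's direct proof (which assumes $\rho$ has u.c.p.\ and derives the strict inequality via the minimizer/non-minimizer dichotomy), it handles the two u.c.p.\ alternatives at once.

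The case $k<0$ does not ``close verbatim''. The unit sphere of $F_0^{\perp}$ is not compact, so attainment of $\inf_{u\in F_0^{\perp},\,\|u\|=1}\int_\Omega\tilde\rho\,u^2\,dx$ cannot be obtained by the finite-dimensional argument used for $k>0$; this is exactly the obstruction the paper isolates and resolves by a weak-convergence argument. Under your contradiction hypothesis the infimum equals $\lambda_{-m}^{-1}(\tilde\rho)<0$, and this negativity rescues you: a minimizing sequence has, up to a subsequence, a weak $X_K(\Omega)$-limit $u\in F_0^{\perp}$ with strong $L^2(\Omega)$ convergence, so $\int_\Omega\tilde\rho\,u^2\,dx<0$ and $u\neq0$; since $\|u\|\le1$ and the integral is negative, normalizing could only decrease it, which together with your lower bound forces $\|u\|=1$ and attainment. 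You must also justify, in this setting, the lower bound $\int_\Omega\rho\,u^2\,dx\ge\lambda_{-m}^{-1}(\rho)$ on the unit sphere of $F_0^{\perp}$ and the fact that a minimizer there is a $\lambda_{-m}(\rho)$-eigenfunction: both use that $F_0^{\perp}$ is invariant under the operator $T$ of Proposition \ref{EV} (because $F_0$ is spanned by eigenfunctions) and that the bottom of the spectrum of $T$ restricted to $F_0^{\perp}$ is the isolated eigenvalue $\lambda_{-m}^{-1}(\rho)$, whose Rayleigh minimizers are eigenvectors; none of this follows ``verbatim'' from \eqref{E-}.
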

\begin{proof}
Assume $\rho$ has u.c.p., $\rho^+,{ \tilde{\rho}}^+ \not\equiv 0, \; k>0$. By \eqref{E+}, there exists $F \in \mathcal{F}_k$ s.t.\
\begin{equation}
\lambda_{k}^{-1}(\rho) = \inf_{u \in F,\,||u||=1} \int_{\Omega} \rho(x) u^2\,dx.
\label{Lk}
\end{equation}
Fix $u \in F, \; ||u||=1$. Two cases may occur:
\begin{itemize}[leftmargin=0.6cm]
\item[$(a)$] if $u$ is a minimizer in \eqref{Lk}, then $u$ is a $\lambda_{k}(\rho)$-eigenfunction, hence $|\{u=0\}|=0$. So we have $\rho u^2 \leq \tilde{\rho} u^2$, with strict inequality on a subset of $\Omega$ with positive measure, hence
$$\lambda_{k}^{-1}(\rho) = \int_{\Omega} \rho(x) u^2\,dx <  \int_{\Omega} \tilde{\rho}(x) u^2\,dx;$$
\item[$(b)$] if $u$ is not a minimizer in \eqref{Lk}, then 
$$\lambda_{k}^{-1}(\rho) < \int_{\Omega} \rho(x) u^2\,dx \leq  \int_{\Omega} \tilde{\rho}(x) u^2\,dx.$$
\end{itemize}
In both cases, we have  
$$\lambda_{k}^{-1}(\rho) < \int_{\Omega} \tilde{\rho}(x) u^2\,dx.$$
Since $F$ has finite dimension, the set of $u$'s above is compact. Recalling also \eqref{E+} with weight $\tilde{\rho}$, we have 
$$\lambda_{k}^{-1}(\rho) < \inf_{u \in F,\,||u||=1} \int_{\Omega} \tilde{\rho}(x) u^2\,dx \leq \lambda_{k}^{-1}(\tilde{\rho}) .$$
Now we assume $\rho^-,\tilde\rho^-\not\equiv 0$ and consider negative eigenvalues, i.e., $k<0$. Set $j=-k$ for simplicity. By \eqref{E-},  there exists $F \in \mathcal{F}_{j-1}$ s.t.\
$$\lambda_{-j}^{-1}(\rho) = \inf_{u \in F^{\perp},\,||u||=1} \int_{\Omega} \rho(x) u^2\,dx.$$
Arguing as above, we see that for all $u \in F^{\perp}$, $||u||=1$
\begin{equation}
\lambda_{-j}^{-1}(\rho)<\int_{\Omega} \tilde{\rho}(x) u^2\,dx.
\label{L-k}
\end{equation}
But $F^{\perp}$ is infinite dimensional, so we can not easily minimize in \eqref{L-k}. Set
$$m= \inf_{u \in F^{\perp},\,||u||=1}  \int_{\Omega} \tilde{\rho}(x) u^2\,dx,$$
and argue by contradiction, assuming $m=\lambda_{-j}^{-1}(\rho)<0$. Let $(u_n)$ be a sequence in $F^{\perp}$, s.t.\ $||u_n||=1$, and 
$$ \int_{\Omega} \tilde{\rho}(x) u_n^2\,dx \rightarrow m.$$
Since $(u_n)$ is bounded, passing if necessary to a subsequence we find $u \in F^{\perp}$ s.t.\ $u_n \rightharpoonup u$ in $X_K(\Omega)$, $u_n \rightarrow u$ in $L^2(\Omega)$. The last relation implies
$$ \int_{\Omega} \tilde{\rho}(x) u^2\,dx = m,$$ 
in particular $u\neq 0$. Set $\hat{u}=u/||u|| \in F^{\perp}$, then $||\hat{u}||=1$, which by \eqref{L-k} implies
$$\lambda_{-j}^{-1}(\rho) < \int_{\Omega} \tilde{\rho}(x) \hat{u}^2\,dx = \frac{m}{||u||^2} = \frac{\lambda_{-j}^{-1}(\rho)}{||u||^2},$$
hence (recalling that $\lambda_{-j}(\rho)<0$) we get $||u||>1$. But $u_n \rightharpoonup u$ in $X_K(\Omega)$ implies $\|u\|\le 1$, a contradiction. So $m > \lambda_{-j}^{-1}(\rho)$, which by \eqref{E-} implies 
$$\lambda_{-j}^{-1}(\rho) < \inf_{u \in F^{\perp},\,||u||=1}  \int_{\Omega} \tilde{\rho}(x) u^2\,dx \leq \lambda_{-j}^{-1}(\tilde{\rho}),$$
so $\lambda_{-j}(\rho) > \lambda_{-j}(\tilde{\rho})$. 
\end{proof}

\noindent
The next result establishes the reverse implication:

\begin{Teo}\label{REV}
Let $\rho \in L^{\infty}(\Omega) \setminus \{0\}$ do not satisfy u.c.p. Then, there exist $\tilde{\rho} \in L^{\infty}(\Omega) \setminus \{0\}$ s.t.\ either $\rho \leq \tilde{\rho}$ or $\rho \geq \tilde{\rho}$, $\rho \not\equiv \tilde{\rho}$, and $k \in \Z_0$ s.t.\ $\lambda_k(\rho) = \lambda_k(\tilde{\rho})$.	
\end{Teo}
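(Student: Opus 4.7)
The plan is to construct $\tilde\rho$ as a small perturbation of $\rho$ supported on the zero set of an eigenfunction: the perturbation preserves that eigenvalue, while any adjacent eigenvalue is shifted only slightly by Proposition \ref{CD}; the monotonicity \eqref{M} then forces $\lambda_k(\tilde\rho)=\lambda_k(\rho)$ at an appropriate index $k$.

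Since $\rho$ fails u.c.p., pick $\lambda\in\sigma(\rho)$ and an eigenfunction $u\in X_K(\Omega)\setminus\{0\}$ with $A:=\{u=0\}$ of positive measure, and for $t>0$ set $\tilde\rho_t:=\rho+t\chi_A$. Then $\tilde\rho_t\geq\rho$, $\tilde\rho_t\not\equiv\rho$, and $\|\tilde\rho_t-\rho\|_\infty=t$. For every $v\in X_K(\Omega)$,
\[
\langle u,v\rangle=\lambda\int_\Omega \rho\,uv\,dx=\lambda\int_\Omega \tilde\rho_t\,uv\,dx,
\]
the second equality holding since $u=0$ a.e.\ on $A$; hence $u$ is a $\lambda$-eigenfunction of \eqref{P} with weight $\tilde\rho_t$, so in particular $\lambda\in\sigma(\tilde\rho_t)$. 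Note that $\tilde\rho_t^+\not\equiv 0$ is automatic, while $\tilde\rho_t^-\not\equiv 0$ for $t$ small enough because $|\{\tilde\rho_t<0\}|\geq|\{\rho<-t\}|\to|\{\rho<0\}|>0$ as $t\to 0^+$; thus $\lambda_k(\tilde\rho_t)$ is defined at all admissible $k$ for such $t$.

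Suppose first $\lambda>0$, and take the largest $k>0$ with $\lambda_k(\rho)=\lambda$, which exists because the $\lambda$-eigenspace is finite dimensional. Then $\lambda_{k+1}(\rho)>\lambda$, and Proposition \ref{CD} yields $\lambda_{k+1}(\tilde\rho_t)>\lambda$ for all sufficiently small $t>0$, while \eqref{M} gives $\lambda_k(\tilde\rho_t)\leq\lambda_k(\rho)=\lambda$. Since $\lambda\in\sigma(\tilde\rho_t)$ there is $j>0$ with $\lambda_j(\tilde\rho_t)=\lambda<\lambda_{k+1}(\tilde\rho_t)$, forcing $j\leq k$ and hence $\lambda\leq\lambda_k(\tilde\rho_t)$. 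Combining, $\lambda_k(\tilde\rho_t)=\lambda_k(\rho)$ for small $t>0$, and $\tilde\rho:=\tilde\rho_t$ is the desired weight. The case $\lambda<0$ is analogous: take the largest $k<0$ with $\lambda_k(\rho)=\lambda$. If $k<-1$, then $\lambda_{k+1}(\rho)>\lambda$ and the same interlacing argument applies. If $k=-1$, then $\lambda_{-1}(\tilde\rho_t)\leq\lambda_{-1}(\rho)=\lambda$ by \eqref{M}, while $\lambda\leq\lambda_{-1}(\tilde\rho_t)$ since $\lambda$ is a negative eigenvalue of $\tilde\rho_t$ and $\lambda_{-1}(\tilde\rho_t)$ is the largest such.

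The only delicate point is the choice of $k$: selecting it to be the largest index with $\lambda_k(\rho)=\lambda$ ensures that the adjacent eigenvalue $\lambda_{k+1}(\rho)$ is strictly separated from $\lambda$, hence remains so for $\tilde\rho_t$ when $t$ is small, by continuity. Without this choice, $\lambda_k(\tilde\rho_t)$ could slip strictly below $\lambda$ with $\lambda$ reappearing at a larger index, defeating the argument. Otherwise the proof amounts to a routine application of Propositions \ref{EV} and \ref{CD} together with \eqref{M}.
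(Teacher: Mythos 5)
Your proposal is correct and is essentially the paper's own argument: perturb $\rho$ by a constant on the zero set $A$ of the eigenfunction, note that $u$ stays an eigenfunction with the same eigenvalue for the perturbed weight, then combine the continuity of the adjacent eigenvalue (Proposition \ref{CD}) with the monotonicity \eqref{M} to squeeze $\lambda_k(\tilde\rho)=\lambda_k(\rho)$, after fixing $k$ at the edge of the eigenvalue cluster so that $\lambda_k<\lambda_{k+1}$. The only (harmless) deviation is in the negative case: you keep the upward perturbation $\rho+t\chi_A$ and take the index closest to $-1$, treating $k=-1$ separately, whereas the paper perturbs with $\eps<0$ and takes the other end of the cluster, so that no special case is needed; both variants are valid.
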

\begin{proof}
By Definition \ref{ucp}, we can find $k \in \Z_0$ and a $\lambda_{k}$-eigenfunction $u \in X_K(\Omega)$ s.t.\ $|A|>0$, where $A:=\{u=0\}$. First assume $\rho^+ \not\equiv 0, \; k>0$, and without loss of generality $\lambda_{k}(\rho) < \lambda_{k+1}(\rho)$. For all $\eps \in \R$ set
\[
\rho_{\eps}(x)=
\begin{cases}
\rho(x)                    & \text{if $x \in \Omega \setminus A$} \\
\rho(x) + \eps    & \text{if $x \in A$,}
\end{cases}
\]
so $\rho_{\eps} \in L^{\infty}(\Omega)$ and $\rho_{\eps}\rightarrow \rho$ in $L^{\infty}(\Omega)$ as $\eps \rightarrow 0$.
By Proposition \ref{CD}
$$\lim_{\eps \rightarrow 0} \lambda_{k+1}(\rho_{\eps})= \lambda_{k+1}(\rho) > \lambda_k(\rho),$$
so we can find $\eps \in (0,1)$ s.t.\ $\lambda_{k+1}(\rho_{\eps}) > \lambda_k(\rho)$. Set $\tilde{\rho}= \rho_{\eps} \in L^{\infty}(\Omega) \setminus \{0\}$, so $\rho\le\tilde\rho$, $\rho\not\equiv\tilde\rho$. For all $v \in X_K(\Omega)$ we have
$$\left\langle u,v\right\rangle = \lambda_{k}(\rho) \int_{\Omega} \rho(x) uv\,dx = \lambda_{k}(\rho) \int_{\Omega} \tilde{\rho}(x) uv\,dx,$$
so $\lambda_{k}(\rho) \in \sigma(\tilde{\rho})$ with associated eigenfunction $u$. We can find $h \in \N_0$ s.t.\ 
$$\lambda_{k}(\rho)=\lambda_{h}(\tilde{\rho}) < \lambda_{h+1}(\tilde{\rho}),$$
in particular $\lambda_{h}(\tilde{\rho}) < \lambda_{k+1}(\tilde{\rho})$, which implies $h \leq k$. 
Besides, by \eqref{M} we have
$$\lambda_{k}(\tilde{\rho}) \leq \lambda_{k}(\rho) = \lambda_{h}(\tilde{\rho}),$$
hence $k \leq h$. Summarizing, $h=k$, thus $\lambda_{k}(\rho)=\lambda_k(\tilde{\rho})$. \\
\vskip2pt
\noindent
Now assume $\rho^- \not\equiv 0$ and $k<0$. Set $j=-k$, for simplicity of notation, and  
without loss of generality $\lambda_{-j-1}(\rho) < \lambda_{-j}(\rho)$. Arguing as above (with $\eps<0$), we find $\tilde{\rho} \in L^{\infty}(\Omega)\setminus \{0\}$ s.t.\ $\tilde{\rho} \leq \rho$,
$\tilde{\rho} \not\equiv \rho$, and $\lambda_{-j-1}(\tilde{\rho}) < \lambda_{-j}(\rho)$. For all $v \in X_K(\Omega)$ we have
$$\left\langle u,v\right\rangle = \lambda_{-j}(\rho) \int_{\Omega} \rho(x) uv\,dx = \lambda_{-j}(\rho) \int_{\Omega} \tilde{\rho}(x) uv\,dx,$$
so there exists $i \in \N_0$ s.t.\ $\lambda_{-j}(\rho)= \lambda_{-i}(\tilde{\rho})$, with $u$ as an associated eigenfunction. By $\lambda_{-i}(\tilde{\rho})> \lambda_{-j-1}(\tilde{\rho})$ we have $-i \geq -j$, while by \eqref{M} we have 
$$\lambda_{-j}(\tilde{\rho})\geq \lambda_{-j}(\rho)=\lambda_{-i}(\tilde{\rho}),$$
hence $-j \geq -i$. Thus $-i=-j$ and  $\lambda_{-j}(\rho)= \lambda_{-j}(\tilde{\rho})$.
Clearly, if $\rho$ has constant sign only one of the previous argument applies.
\end{proof}

\begin{Oss}
A partial result for Theorem \ref{DIR} was given in \cite[Proposition 2.10]{IP} for the fractional Laplacian, with two positive weights one of which is in $C^1(\Omega)$.
\end{Oss}

\section{Unique continuation for non-local operators}\label{sec4}

\noindent
This final section is devoted to a brief survey on recent results on u.c.p.\ for non-local operators. Browsing the literature, many results of this type are encountered, dealing in most cases with the fractional Laplacian $(-\Delta)^s$ (which, as seen before, corresponds to our $L_K$ with the kernel $K(x)=|x|^{-N-2s}$). First we recall the main notions of u.c.p.\ considered in the literature:

\begin{Def}\label{SW}
Let $\Omega\subseteq\R^N$ be a domain and $\mathcal{S}$ a family of measurable functions on $\Omega$:
\begin{enumroman}
\item\label{SW1} $\mathcal{S}$ satisfies the strong unique continuation property (s.u.c.p.), if no function $u\in\mathcal{S}\setminus\{0\}$ has a zero of infinite order in $\Omega$;
\item\label{SW2} $\mathcal{S}$ satisfies the unique continuation property (u.c.p.), if no function $u\in\mathcal{S}\setminus\{0\}$ vanishes on a subset of $\Omega$ with positive measure;
\item\label{SW3} $\mathcal{S}$ satisfies the weak unique continuation property (u.c.p.), if no function $u\in\mathcal{S}\setminus\{0\}$ vanishes on an open subset of $\Omega$.
\end{enumroman}
\end{Def}

\noindent
Definition \ref{ucp} corresponds to the case \ref{SW2}. We recall that a function $u\in L^2(\Omega)$ has a zero of infinite order at $x_0\in\Omega$ if for all $n\in\N$
\[\int_{B_r(x_0)}u^2\,dx=O(r^n) \ \text{as $r\to 0^+$.}\]
The relations between the properties depicted in Definition \ref{SW} are the following:
\[\text{\rm s.u.c.p.\ or u.c.p.} \ \Rightarrow {\rm w.u.c.p.}\]
We recall now some recent results on non-local unique continuation.
\vskip2pt
\noindent
In \cite{FF}, Fall and Felli consider fractional Laplacian equations involving regular, lower order perturbations of a Hardy-type potential, of the following type:
\[(-\Delta)^su-\frac{\lambda}{|x|^{2s}}u=h(x)u+f(x,u) \ \text{in $\Omega$,}\]
where $\Omega\subset\R^N$ is a bounded domain s.t.\ $0\in\Omega$, $0<s<\min\{1,N/2\}$, $\lambda<2^{2s}\Gamma^2(\frac{N+2s}{4})/\Gamma^2(\frac{N-2s}{4})$, and $h\in C^1(\Omega\setminus\{0\})$, $f\in C^1(\Omega\times\R)$ satisfy the estimates
\[|h(x)|+|x\cdot\nabla h(x)|\lesssim |x|^{-2s+\eps} \ (\eps>0),\]
\[|f(x,t)t|+|\partial_t f(x,t)t^2|+|\nabla_x F(x,t)\cdot x|\lesssim |t|^p \ \Big(2<p<\frac{2N}{N-2s}\Big),\]
where $F(x,\cdot)$ is the primitive of $f(x,\cdot)$. The main results asserts that, if $u$ is a solution of the equation above and $u$ vanishes of infinite order at $0$, then $u\equiv 0$ (s.u.c.p.). The proof relies on the Caffarelli-Silvestre extension operator, exploited in order to define an adapted notion of frequency function, admitting a limit as $r\to 0^+$.
\vskip2pt
\noindent
Another result of Fall and Felli \cite{FF2} deals with a relativistic Schr\"odinger equation involving a fractional perturbation of $(-\Delta)^s$ and an anisotropic potential:
\[(-\Delta+m^2)^s u-a\Big(\frac{x}{|x|}\Big)\frac{u}{|x|^{2s}}-h(x)u=0 \ \text{in $\R^N$,}\]
where $\Omega$, $s$ are as above, $m\ge 0$, $a\in C^1(S^{N-1})$ and $h\in\Omega$ satisfies a similar estimate. The authors give a precise description of the asymptotic behavior of solutions near the origin, and deduce again s.u.c.p. These results do not apply in our framework, even restricting ourselves to the fractional Laplacian, since they involve {\em smooth} weight functions, differentiability being required in order to derive Pohozaev-type identities.
\vskip2pt
\noindent
Instead, Seo \cite{S13a} considers possibly non-smooth weights in the fractional inequality
\[|(-\Delta)^s u|\le |V(x)u| \ \text{in $\R^N$,}\]
where $N\ge 2$, $N-1\le 2s<N$, and the weight function $V$ satisfies
\[\lim_{r\to 0^+}\sup_{x\in\R^N}\int_{B_r(x)}\frac{|V(y)|}{|x-y|^{N-2s}}\,dy=0.\]
By means of strong Carleman estimates, the author proves w.u.c.p.\ for solutions of the above inequality with $u,(-\Delta)^su\in L^1(\R^N)$. Moreover, Seo \cite{S} obtained a special u.c.p.\ result for potentials $V$ in Morrey spaces.
\vskip2pt
\noindent
We also mention the work of Yu \cite{Y}, where s.u.c.p.\ is proved for fractional powers of linear elliptic operators with Lipschitz continuous coefficients. We recall that, whenever $L$ is a uniformly elliptic operator defined on a bounded domain $\Omega\subset\R^N$, endowed with a discrete set of eigenpairs $(\lambda_k,e_k)$, its $s$-power ($s\in(0,1)$) is defined by
\[L^s u=\sum_{k\in\Z}\lambda_k^s u_k e_k,\]
where $u=\sum u_k e_k$ is the expansion of $u$ in the orthonormal basis $(e_k)$. Such construction, with $L=-\Delta$, leads to the definition of the {\em spectral} fractional Laplacian. Note that such operator, in bounded domains, does not coincide with the Dirichlet fractional Laplacian $(-\Delta)^s$, as observed in \cite{SV1}, since the first eigenvalue (with weight $1$) of the spectral fractional Laplacian is greater than that of $(-\Delta)^s$. We note that the same can be seen by comparing eigenfunctions, as those of the spectral fractional Laplacian lie in $C^1(\overline\Omega)$, while those of $(-\Delta)^s$ have optimal regularity $C^s(\overline\Omega)$.
\vskip2pt
\noindent
The problem of {\em non-smooth} weights is the focus of the work of R\"uland \cite{R}, dealing with the fractional Schr\"odinger-type equation
\[(-\Delta)^s u=V(x)u \ \text{in $\R^N$,}\]
with a measurable function $V=V_1+V_2$ satisfying
\[V_1(x)=|x|^{-2s}h\Big(\frac{x}{|x|}\Big) \ (h\in L^\infty(S^{N-1})), \ |V_2(x)|\le c|x|^{-2s+\eps} \ (c,\eps>0).\]
For $s<1/2$, the following additional conditions are assumed: either $V_2\in C^1(\R^N\setminus\{0\})$ satisfies $|x\cdot\nabla V_2(x)|\lesssim |x|^{-2s+\eps}$, or $s\ge 1/4$ and $V_1\equiv 0$. Under such assumptions, any solution $u\in H^s(\R^N)$ vanishing of infinite order at $0$ is in fact $u\equiv 0$ (s.u.c.p.). R\"uland's approach, based on Carleman estimates, allows for non-smooth weights and generalization to anisotropic operators. A w.u.c.p.\ result for $(-\Delta)^s$ ($s\in(0,1)$), as well as s.u.c.p.\ for the square root of the Laplacian $(-\Delta)^{1/2}$, with a weight in $L^{N+\eps}(\R^N)$, appear in R\"uland \cite{R0}.
\vskip2pt
\noindent
The result of Ghosh, R\"uland, Salo, and Uhlmann \cite[Theorem 3]{GRSU} is the closest to our framework. For any $V\in L^\infty(\Omega)$ and any $s\in[1/4,1)$, if $u\in H^s(\R^N)$ solves
\[(-\Delta)^su=V(x)u \ \text{in $\Omega$}\]
and vanishes on a subset of $\Omega$ with positive measure, then $u\equiv 0$ (u.c.p.). Here the approach is based on Carleman estimates again, along with a boundary u.c.p.\ for solutions of the (local) degenerate elliptic equation
\[\nabla\cdot\big(x_{N+1}^{1-2s}\nabla u\big)=0 \ \text{in $\R^{N+1}_+$,}\]
with homogeneous Robin conditions. By combining the results of \cite{GRSU} with our Theorems \ref{DIR}, then, we have:

\begin{Cor}
Let $L_K$ be defined by $s\in[1/4,1)$ and $K(x)=|x|^{-N-2s}$, $\rho,\tilde\rho\in L^\infty(\Omega)$ be s.t.\ $\rho\le\tilde\rho$, $\rho\not\equiv\tilde\rho$. Then, $\lambda_k(\rho)>\lambda_k(\tilde\rho)$ for all admissible $k\in\Z_0$.
\end{Cor}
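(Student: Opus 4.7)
The plan is to deduce the corollary by checking the u.c.p.\ hypothesis of Theorem~\ref{DIR} through the result of \cite{GRSU}, so the whole argument reduces to an assembly of two ingredients that are already in place.

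First I would observe that under the hypothesis $K(x)=|x|^{-N-2s}$ the operator $L_K$ coincides with the Dirichlet fractional Laplacian $(-\Delta)^s$, as already noted after ${\bf H}_K$. Consequently, any eigenfunction $u\in X_K(\Omega)$ of \eqref{P} associated with $\lambda\in\sigma(\rho)$ is a weak solution in $\Omega$ of
$$(-\Delta)^s u = V(x)\,u,\qquad V:=\lambda\rho\in L^\infty(\Omega).$$
Since $X_K(\Omega)$ embeds continuously into $H^s(\R^N)$ by \cite[Lemmas 5-8]{SV}, and its elements are extended by zero outside $\Omega$, the eigenfunction $u$ belongs to $H^s(\R^N)$ and fits exactly the setting of \cite[Theorem 3]{GRSU}.

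Next, I would invoke that theorem: for $s\in[1/4,1)$ and $V\in L^\infty(\Omega)$, any $H^s(\R^N)$-solution of $(-\Delta)^s u=Vu$ in $\Omega$ vanishing on a set of positive measure must be identically zero. In our language this is precisely the content of Definition~\ref{ucp}, hence both $\rho$ and $\tilde\rho$ enjoy u.c.p. The hypotheses of Theorem~\ref{DIR} are then met, and we obtain $\lambda_k(\rho)>\lambda_k(\tilde\rho)$ for every admissible $k\in\Z_0$.

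The substantive work is not in the corollary itself but in the two pieces being combined: the Carleman-estimate-based u.c.p.\ of \cite{GRSU} and our abstract Theorem~\ref{DIR}. The only routine verification required is the identification of the weak formulation of \eqref{P} with the distributional equation $(-\Delta)^s u=Vu$ to which \cite{GRSU} applies; this follows from the fact that, for $K(x)=|x|^{-N-2s}$, the bilinear form $\langle\cdot,\cdot\rangle$ on $X_K(\Omega)$ agrees (up to the usual normalizing constant, absorbable into $\lambda$) with the Gagliardo form of $H^s(\R^N)$. No further obstacle is expected, since the admissibility condition in Theorem~\ref{DIR} and the range $s\in[1/4,1)$ required by \cite{GRSU} are propagated unchanged to the statement.
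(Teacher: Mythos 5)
Your proposal is correct and follows essentially the same route as the paper, which obtains the corollary precisely by combining the u.c.p.\ result of \cite[Theorem 3]{GRSU} (applied to eigenfunctions viewed as $H^s(\R^N)$-solutions of $(-\Delta)^su=\lambda\rho u$ with $\lambda\rho\in L^\infty(\Omega)$) with Theorem \ref{DIR}. Your added verification that the weak formulation of \eqref{P} for $K(x)=|x|^{-N-2s}$ matches the setting of \cite{GRSU} is exactly the routine identification the paper leaves implicit.
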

\vskip4pt
\noindent
{\small {\bf Acknowledgement.} Both authors are members of GNAMPA (Gruppo Nazionale per l'Analisi Matematica, la Probabilit\`a e le loro Applicazioni) of INdAM (Istituto Nazionale di Alta Matematica 'Francesco Severi'). A.\ Iannizzotto is partially supported by the research project {\em Integro-differential Equations and Non-Local Problems}, funded by Fondazione di Sardegna (2017).}

\end{document}